\theoremstyle{plain}
\newtheorem{prop}{Proposition}[section]
\newtheorem{coro}[prop]{Corollary}
\newtheorem{lemm}[prop]{Lemma}
\newtheorem{thm}[prop]{Theorem}
\newtheorem{ex}[prop]{Example}
\theoremstyle{definition}
\newtheorem{defn}[prop]{Definition}
\newtheorem{rem}[prop]{Remark}
\DeclareMathOperator{\breadth}{breadth}
\DeclareMathOperator{\maxdeg}{maxdeg}
\DeclareMathOperator{\mindeg}{mindeg}
\def\mcg#1;#2{\Gamma_{#1,#2}}
\def\fg#1;#2{\Pi_{#1,#2}}
\def\tb#1;#2{\mathscr{K}_{\frac{#1}{#2}}}
\begin{document}

\title[A Simple Characterization of Adequate Links]
{A Simple Characterization of Adequate Links}

\keywords{Jones polynomial, adequate links, Turaev genus, quasi-alternating, crossing number}

\author{Khaled Qazaqzeh}
\address{Department of Mathematics, Faculty of Science, Yarmouk University, Irbid, Jordan, 21163}
\email{qazaqzeh@yu.edu.jo}
\urladdr{http://faculty.yu.edu.jo/qazaqzeh}

\author{Nafaa Chbili}
\address{Address: Department of Mathematical Sciences, College of Science, UAE University, 15551 Al Ain, U.A.E.}
\email{nafaachbili@uaeu.ac.ae}
\urladdr{http://faculty.uaeu.ac.ae/nafaachbili}

\subjclass[2020]{57K10, 57K14}
\date{17/12/2024}

\begin{abstract}
We prove that the Jones diameter of a link is twice its crossing number whenever  the breadth of its Jones polynomial equals the difference between the  crossing number and the  Turaev genus. This implies  that such  link is adequate, as per the characterization provided in \cite[Theorem\,1.1]{KL}. By combining this  with the result in \cite[Theorem\,3.2]{A}, we obtain a  characterization of adequate links  using these numerical link invariants. As an application, we provide a criterion  to obstruct a link from being  quasi-alternating. Furthermore, we  establish a lower bound for the crossing number of certain classes of links, aiding in  determining the crossing number of the link  in specific cases.
\end{abstract}

\maketitle

\section{Introduction}

Over the past few decades, the Jones polynomial has proven to be an effective tool in addressing various problems in knot theory. Notably, the independent work of Thistlethwaite \cite{Th}, Kauffman \cite{K} and Murasugi \cite{Mu} shows  that the breadth of the Jones polynomial of a link is a lower bound for its crossing number. Moreover, equality holds if and only if the link is alternating. As a result, a reduced alternating diagram has the minimal crossing number
among all diagrams of the given link. This provides a solution to one of Tait’s conjectures.\\

The class of adequate links represents a natural generalization of the well-studied class of alternating links in several aspects. This new class of links was first introduced in \cite{LT}, and it is well-known that some properties of alternating links extend, at least partially, to this class. We briefly recall the definition of adequate links in the following paragraph.\\

Given a link diagram $D$, a Kauffman state of $D$ is a choice of  $A$-resolution or $B$-resolution for every crossing of the diagram, see Figure \ref{figure}.  This choice of resolutions of the diagram $D$, denoted by $\sigma(D)$, leads to a collection of simple closed curves called state circles.  Let $|\sigma(D)|$ denote the number of these state circles.  The state $\sigma_{A}(D)$ denotes the all-$A$ state  obtained by choosing the $A$-resolution at every crossing of the diagram $D$. In a similar manner, the state $\sigma_{B}(D)$ denotes the all-$B$ state obtained by choosing the $B$-resolution at every crossing of the diagram $D$. A link diagram is said to be $A$-adequate if $|\sigma_{A}(D)| > |\sigma^{'}(D)|$ for any state $\sigma^{'}(D)$ of $D$ obtained by choosing $A$-resolution at every crossing of the diagram $D$ except one.  In a similar manner, the link is $B$-adequate if $|\sigma_{B}(D)| > |\sigma^{''}(D)|$ for any state $\sigma^{''}(D)$ of $D$ obtained by choosing $B$-resolution at every crossing of the diagram $D$ except one. A link is said to be {\it adequate} if it has a diagram that is both  $A$-adequate and $B$-adequate at the same time. If only one of the two conditions is satisfied then the link is said to be {\it semi-adequate}.

\begin{figure} [h]
\begin{center}
\includegraphics[scale=0.5]{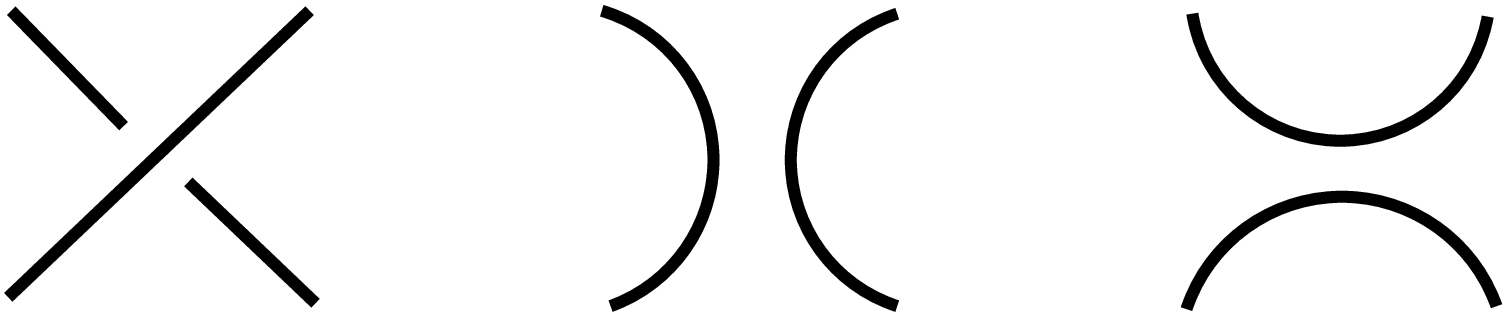} \\
\end{center}
\caption{The link diagram $L$ at the crossing $c$, its $A$-resolution and $B$-resolution respectively.}\label{figure}
\end{figure}

For the rest of this paper, we let $L$ be an oriented link, $V_{L}(t)$ be its Jones polynomial, and $J_{L}(n)(t)$ be its colored Jones polynomial. We recall the breadth of $V_L(t)$, denoted hereafter $\breadth(V_{L}(t))$, as the difference between the highest and lowest powers of $t$ that appear in $V_{L}(t)$. Moreover,  we let $c(L)$ be the crossing number of $L$ and $g_T(L)$ its Turaev genus. It is  already known that a link $L$ is alternating if and only if $g_T(L)=0$. Moreover  in \cite[Corollary\,7.3]{DFKLS}, Dasbach et al. proved that $g_{T}(L) \leq  c(L) -\breadth(V_{L}(t))$ for any non-split link $L$.  Later, Abe \cite{A}  proved that this inequality becomes an equality if the given link is adequate  \cite[Theorem\,3.2]{A}. Further, Kalfagianni and Lee  \cite{KL} proved that  a link is  adequate if  its  Jones diameter $jd_{L}$ is  twice  its crossing number. In this paper, we prove the converse of Abe's result by showing that the Jones diameter of a link that satisfies the above equality is equal to twice of its crossing number. Therefore, a link is adequate if and only if it satisfies  $\breadth(V_{L}(t)) = c(L) -g_{T}(L)$.

\section{Main Result and its Proof}

In this section, we first briefly recall the basic notation and terminology that will be used in the rest of this paper.
\begin{defn}
The Turaev genus $g_{T}(L)$ of the non-split link $L$ is defined as $$g_{T}(L) = \min \bigl\{g_{T}(D)\bigr\} = \min \bigl\{\frac{1}{2}\left(c(D) + 2 - |\sigma_{A}(D)| - |\sigma_{B}(D)|\right) | \ D \ \text{is a diagram of} \ L\bigr\}.$$
\end{defn}
\begin{defn}
The Kauffman bracket polynomial is a function from the set of unoriented link diagrams in the oriented plane to the ring of Laurent polynomials with integer coefficients in an indeterminate $A$. It maps a link diagram $L$ to $\left\langle L\right\rangle\in \mathbb Z[A^{-1},A]$ and is uniquely  determined by the  following relations:
\begin{enumerate}
\item $\left\langle \bigcirc \right\rangle=1$,
\item $\left\langle \bigcirc \cup L\right\rangle=(-A^{-2}-A^2)\left\langle L\right\rangle$,
\item $\left\langle L\right\rangle=A\left\langle L_0\right\rangle+A^{-1}\left\langle L_1\right\rangle$,
\end{enumerate}
where $\bigcirc$ denotes the unknot and  $L,L_0, \text{and } L_1$  represent three unoriented link diagrams which  are identical except in a small region where they  look as in  Figure \ref{figure}.
\end{defn}

For a given oriented link diagram $L$, we let $x(L)$ be the number of negative crossings and $y(L)$  be the number of positive crossings in $L$, see Figure \ref{Diagram1}. The  writhe of $L$ is defined as the integer $w(L) = y(L) - x(L)$.

\begin{defn}
The Jones polynomial $V_{L}(t)$ of an oriented link $L$ is the Laurent polynomial in $t^{1/2}$ with integer coefficients defined by
\begin{equation*}
V_{L}(t)=((-A)^{-3w(L)}\left\langle L \right\rangle)_{t^{1/2}=A^{-2}}\in \mathbb Z[t^{-1/2},t^{1/2}],
\end{equation*}
where $\left\langle L \right\rangle$ denotes the bracket polynomial of the link diagram $L$ with orientation ignored.
\end{defn}

\begin{figure}[h]
	\centering
		\includegraphics[scale=0.10]{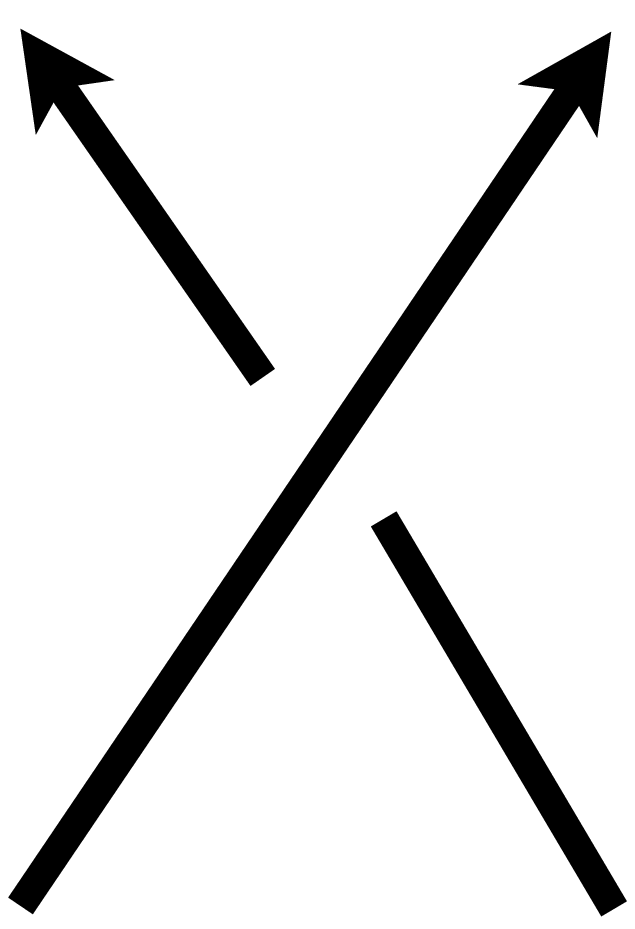}\hspace{1.5cm}\reflectbox{\includegraphics[scale=0.10]{Diagram1}}
	\caption{Positive and negative crossings respectively}
	\label{Diagram1}
\end{figure}

\begin{defn}
The unreduced $n$-th colored Jones polynomial of the link $L$, with a diagram $D$ is given by $J_{L}(n)(t) = ((-1)^{n-1}A^{n^{2}-1})^{w(D)}\langle D^{n-1}\rangle_{A^{-2} = t^{1/2}}$, where 
$D^{n}$ denotes the $n$-blackboard cable of $D$ decorated by the Jones-Wenzl projector $f^{(n)}$. 
\end{defn}

\begin{rem}
It is easy to see that $J_{L}(2)(t) = (-t^{1/2}-t^{-1/2})V_{L}(t)$. For further details on the Kauffman bracket skein modules and the Temperley-Lieb algebra $TL_{n}$, we refer the reader to \cite{KL2,P}.
\end{rem}

Now, we state and prove the main theorem in this paper which  provides  a simple characterization of adequate links through  some numerical link invariants.  Prior to that, we establish and prove   a few  key lemmas that are crucial for the   proof of our   main theorem.
\begin{lemm}\label{basic}
Let $L$ be a non-split link and $D$ a diagram of $L$, then  we have:  $$\breadth(V_{L}(t)) \leq c(D) -g_{T}(D).$$
\end{lemm}
\begin{proof}
For any link diagram $D$ and according to \cite[Lemma\,5.4]{Lick}, we have
 \begin{align*}
 \begin{array}{rl}
\breadth (\langle D\rangle)& \leq 2c(D) +2 |\sigma_{A}(D)| + 2|\sigma_{B}(D)| -4 \\ & \leq 2c(D) + 2c(D) + 4 - 4g_{T}(D) -4 = 4c(D) - 4g_{T}(D),
\end{array}
\end{align*}
 where the second inequality follows from the fact that $|\sigma_{A}(D)| + |\sigma_{B}(D)| \leq c(D) +2$ for any diagram $D$ of the link $L$ and from the fact $2g_{T}(D) = \left(c(D) + 2 - |\sigma_{A}(D)| - |\sigma_{B}(D)|\right)$. Now the result  follows as a consequence of $\breadth (\langle D\rangle) =  4\breadth (V_{L}(t))$.
\end{proof}
In the following remark, we establish  some notations and terminologies for the remainder of this paper.
\begin{rem}\label{notation}
\begin{enumerate}
\item Let $K(S^{2})$ denote the Kauffman bracket skein module of the sphere $S^2$, see \cite{P}.  Any skein element $\mathcal{S} \in K(S^{2})$ decorated by a number of Jones-Wenzl projectors can be expressed as a linear combination of skein elements by expanding  each Jones-Wenzl projector in terms of the elements of the basis $\{1_{n}, e_{1}^{n},e_{2}^{n},\ldots,e_{n-1}^{n}\}$ for the algebra $TL_{n}$ as
\begin{equation}\label{basicequation}
\mathcal{S} = \overline{\mathcal{S}} + \sum f_{*}\overline{\mathcal{S}}_{*},
\end{equation}
where
\begin{enumerate}
\item  $\overline{\mathcal{S}}$ denotes the skein element obtained from $\mathcal{S}$ by replacing each copy of the Jones-Wenzl projector by the identity element $1_{n} \in TL_{n}$.
\item $\overline{\mathcal{S}}_{*}$ denotes the skein element obtained from replacing at least one Jones-Wenzl projector by one of the elements of the set $\{e_{1}^{n},e_{2}^{n},\ldots,e_{n-1}^{n}\}$.
\end{enumerate}
\item For the skein element $\overline{\mathcal{S}} \in K(S^{2})$ as above, we obtain special skein elements from this skein element as follows:
\begin{enumerate}
\item $\overline{\mathcal{S}}_{A}$ denotes the skein element obtained by choosing all-$A$ Kauffman state to all crossings of $\overline{\mathcal{S}} $.
\item $\overline{\mathcal{S}}_{a}$ denotes the skein element obtained by resolving all crossings that correspond to one crossing in the original diagram before the decoration by the Jones-Wenzl projectors in the same way.  We point out that there are $n^{2}$ crossings in  $\overline{\mathcal{S}}$ that correspond to one crossing in the original diagram before decoration. Moreover, there is one-to-one correspondence between such skein elements and the skein elements obtained from the original diagram before decoration after resolving all of its crossings.
\item $\overline{\mathcal{S}}_{e}$ denotes the skein element obtained by resolving all crossings in a way such that this state is not equal to  some $\overline{\mathcal{S}}_{a}$. In other words, this skein element does not correspond to a skein element of the original diagram before decoration after resolving all of its crossings.
\end{enumerate}
\end{enumerate}
\end{rem}
We define  the maximum and minimum degrees of a Kauffman state $\sigma(D)$ as follows: $\maxdeg(\sigma(D)) = a(\sigma(D))-b(\sigma(D)) +2|\sigma(D)|-2$ and $\mindeg(\sigma(D)) = a(\sigma(D))-b(\sigma(D)) - 2|\sigma(D)|+2$, where $a(\sigma(D))$ and $b(\sigma(D))$ are the numbers of the crossings of the diagram $D$ resolved using $A$- and $B$-resolutions, respectively. It follows that $\maxdeg(\sigma(D)) \leq \maxdeg(\sigma_{A}(D))$ and $\mindeg(\sigma_{B}(D)) \leq \mindeg(\sigma(D))$ for any Kauffman state $\sigma(D)$ as a consequence of \cite[Corollary\,3.16]{KL}. In what follows, the term {\it extreme $A$-state} (respectively {\it extreme $B$-state}) refers to a state for which the first (respectively the  second) inequality becomes an equality. Also, we define the {\it parity} of a given state $\sigma(D)$ as the parity of $|\sigma(D)|$.

Given a  rational function $f(A) =\frac{P(A)}{Q(A)}$ with $P(A)$ and $Q(A)$ polynomials with complex coefficients in the variable $A$. Following \cite[Section\,3.4]{KL}, we recall the definition of $\maxdeg(f(A))$ as the maximum power of $A$ in the formal Laurent series expansion of $f(A)$ whose $A$-power is bounded from above. We note that $\maxdeg f(A) = \maxdeg(P(A)) - \maxdeg(Q(A))$. The minimum degree of $f(A)$,  $\mindeg(f(A))$, can be also defined in a similar manner.

\begin{lemm}\label{new1}
Suppose that $\mathcal{S} \in K(S^{2})$ is a skein element decorated by a number of the Jones-Wenzl projectors. Then $\maxdeg \left(f_{*}\langle \overline{\mathcal{S}}_{*}\rangle\right)< \maxdeg \left(A^{c} \langle \overline{\mathcal{S}}_{A} \rangle\right)$, where $\overline{\mathcal{S}}_{*}$ and $\overline{\mathcal{S}}_{A}$ are the skein elements introduced in Remark \ref{notation} and $c$ is the total number of crossings of the diagram of $\overline{\mathcal{S}}$.
\end{lemm}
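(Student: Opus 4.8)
The plan is to convert the statement into a comparison of degrees of individual Kauffman states, and then to balance two competing quantities: the negative degree contributed by the Jones--Wenzl coefficient $f_*$ against the change in the number of all-$A$ state circles produced by the turnbacks appearing in $\overline{\mathcal{S}}_*$. First I would record the exact value $\maxdeg(A^c\langle\overline{\mathcal{S}}_A\rangle)=c+2|\sigma_A(\overline{\mathcal{S}})|-2$, which comes from $\langle\overline{\mathcal{S}}_A\rangle=(-A^2-A^{-2})^{|\sigma_A(\overline{\mathcal{S}})|-1}$. Since each Jones--Wenzl projector and each generator $e_j^n$ is crossingless, the diagram $\overline{\mathcal{S}}_*$ has the same crossing number $c$ as $\overline{\mathcal{S}}$; hence the state estimate quoted from \cite[Corollary\,3.16]{KL} gives $\maxdeg\langle\overline{\mathcal{S}}_*\rangle\le\maxdeg(\sigma_A(\overline{\mathcal{S}}_*))=c+2|\sigma_A(\overline{\mathcal{S}}_*)|-2$. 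Using the additivity $\maxdeg(f_*\langle\overline{\mathcal{S}}_*\rangle)=\maxdeg(f_*)+\maxdeg\langle\overline{\mathcal{S}}_*\rangle$, the claim reduces to the single inequality $\maxdeg(f_*)+2|\sigma_A(\overline{\mathcal{S}}_*)|<2|\sigma_A(\overline{\mathcal{S}})|$.

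Next I would bound the two ingredients. For the coefficient, expanding $f^{(n)}=1_n+\sum_{D\ne 1_n}c_D\,D$ by Wenzl's recursion shows every non-identity coefficient is a ratio of quantum integers whose numerator degree falls short of its denominator degree, so $\maxdeg(f_*)\le-2$, with equality exactly for a single generator $e_j^n$. For the circle count I would compare the all-$A$ states of $\overline{\mathcal{S}}$ and $\overline{\mathcal{S}}_*$ directly: they differ only by re-smoothing at the finitely many turnback sites, and a single turnback $e_j^n$ joins two adjacent parallel strands of the cable, so it alters the number of all-$A$ circles by exactly $\pm 1$. In the generic situation the two strands are distinct through-strands of the state, the turnback merges their circles, $|\sigma_A(\overline{\mathcal{S}}_*)|=|\sigma_A(\overline{\mathcal{S}})|-1$, and the reduced inequality holds with room to spare since $-2+2(|\sigma_A(\overline{\mathcal{S}})|-1)<2|\sigma_A(\overline{\mathcal{S}})|$.

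The hard part will be the opposite situation, in which the turnback splits rather than merges, so that $|\sigma_A(\overline{\mathcal{S}}_*)|=|\sigma_A(\overline{\mathcal{S}})|+1$; there the crude bound degenerates to an equality and the balancing above is not enough. To handle it I would show that a split can only occur when, in the all-$A$ state of $\overline{\mathcal{S}}$, the two strands carrying the turnback already close up directly into each other both above and below the projector box. In that configuration the cap of $e_j^n$ bounds a small state circle that runs parallel to a neighbouring circle across an adjacent cabled crossing, so $\overline{\mathcal{S}}_*$ fails to be $A$-adequate at that crossing. This produces a second extreme-$A$ state whose parity is opposite to that of the all-$A$ state, and by the parity and extreme-state bookkeeping set up just before the lemma their two leading contributions cancel. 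Because every power of $A$ occurring in a bracket lies in a fixed residue class modulo $2$, the cancellation forces $\maxdeg\langle\overline{\mathcal{S}}_*\rangle\le c+2|\sigma_A(\overline{\mathcal{S}}_*)|-4$, which restores the strict inequality. Finally I would note that the general term, in which several projectors are replaced or a deeper basis element occurs, is handled by the same balancing, each turnback contributing at most $-2$ to $\maxdeg(f_*)$ and at most $+1$ to the circle count, so the single-generator case is binding. The main obstacle throughout is the delicate verification that a splitting turnback forces the non-adequate configuration: pinning down the local picture of the all-$A$ state of the cable near the box is where the real work lies.
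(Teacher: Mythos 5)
Your skeleton is the same as the paper's: reduce via \cite[Corollary\,3.16]{KL} to a comparison of all-$A$ states, then settle the resulting crossingless comparison by weighing the degree of the coefficient $f_*$ against circle counts. But the case analysis you build on top of that reduction has a genuine gap, and it sits exactly where you located ``the real work.'' Your single-generator ``hard case'' (a turnback that splits a state circle) cannot occur: $\overline{\mathcal{S}}$ is an $n$-parallel of the underlying diagram $D$, so $\sigma_A(\overline{\mathcal{S}})$ is the $n$-parallel of $\sigma_A(D)$ (this is precisely the fact recorded in Remark \ref{new3} and used in Lemma \ref{extreme}); consequently the two strands entering any single turnback lie on two \emph{distinct}, adjacent parallel state circles, and the turnback always merges them. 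You never invoke this parallel-copies structure, and instead spend the effort on an unsupported cancellation argument: the claim that a split configuration produces exactly one additional extreme $A$-state, of opposite parity, so that the leading coefficient $\sum(-1)^{|\sigma|-1}$ vanishes, is not justified --- nothing controls the number of extreme $A$-states of $\overline{\mathcal{S}}_*$ or pairs them off by parity from a local picture; that kind of bookkeeping is global, which is exactly why Lemma \ref{newsimple} requires the correspondence of Lemma \ref{extreme}.

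The genuinely hard case is the one you dismiss in your last sentence. When several projectors are replaced, or one projector is replaced by a basis diagram with several turnbacks, splits \emph{do} occur and the all-$A$ circle count can exceed that of the identity term: for example, three boxes on the $2$-cable of a single circle, each replaced by $e_1$, give $3$ circles against $2$ for the identity term. In that regime your balancing --- at most $-2$ in degree per turnback from $f_*$, at most $+1$ circle (i.e.\ $+2$ in degree) per turnback --- yields only a non-strict inequality, so ``the single-generator case is binding'' is exactly backwards: the single-generator case is the easy one, and the multi-turnback case is where strictness must be earned. The repair is the same structural fact as above: performing the turnback surgeries one at a time, the \emph{first} surgery always merges two distinct parallel circles, so for $T\geq 1$ turnbacks one gets $|\sigma_A(\overline{\mathcal{S}}_*)|\leq|\sigma_A(\overline{\mathcal{S}})|+T-2$, whence $\maxdeg(f_*)+2|\sigma_A(\overline{\mathcal{S}}_*)|\leq 2|\sigma_A(\overline{\mathcal{S}})|-4$, strictly less as required (granting your per-turnback coefficient bound $\maxdeg(f_*)\leq -2T$, which itself still needs proof). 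For comparison, the paper's own crossingless step simply asserts that $\overline{\mathcal{S}}_*$ has strictly fewer circles than $\overline{\mathcal{S}}_A$ together with $\maxdeg(f_*)<0$; your instinct that the circle count need not drop is sound --- the example above shows it can even rise --- but the argument you give does not close the resulting gap, while the parallel-copies observation does.
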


\begin{proof}
We consider first the case when the diagram $D$ is a crossingless diagram. In other words,  $D$ is a disjoint union of simple closed curves. In this case, it is not too hard to see that the states $\overline{\mathcal{S}}_{*}$ and $\overline{\mathcal{S}}=\overline{\mathcal{S}}_{A}$ consist of simple closed curves and that the number of simple closed curves in the first state is less than the number of simple closed curves in the second one. Moreover, the coefficient $f_{*}$ of any state $\overline{\mathcal{S}}_{*}$ in the linear combination expressing the original state $\mathcal{S}$ in terms of the basis elements $\{1_{n}, e_{1}^{n}, e_{2}^{n},\ldots, e_{n-1}^{n}\}$ in Equation (\ref{basicequation}) has negative maximum degree if the number of Jones-Wenzl projectors is not zero. Thus the result follows directly as a result of combining these two facts. Now we consider the case if $D$ has some crossings. Now as a consequence of \cite[Corollary\,3.16]{KL}, we know that $\maxdeg\left(\langle \mathcal{S} \rangle\right)\leq \maxdeg\left(A^{c}\langle \mathcal{S}_{A} \rangle\right)$ for any skein element $\mathcal{S} \in K(S^{2})$ and for any choice of resolving the crossings of such element. This implies that
\begin{align*}
\maxdeg\left(f_{*}\langle \overline{\mathcal{S}}_{*} \rangle\right)\leq \maxdeg\left(A^{c}f_{*}\langle \overline{\mathcal{S}}_{*A} \rangle\right) &  = c+   \maxdeg\left(f_{*}\langle \overline{\mathcal{S}}_{*A} \rangle\right) \\ &  < c+ \maxdeg\left(\langle \overline{\mathcal{S}}_{A} \rangle\right) = \maxdeg\left(A^{c} \langle \overline{\mathcal{S}}_{A}\rangle\right),
\end{align*}
where the last inequality follows from the case when the skein element has no crossings.
\end{proof}

\begin{rem}\label{new3}
If $\mathcal{S}$ is a skein element in $K(S^{2})$ that is obtained from the original diagram by resolving all of its crossings such that $\maxdeg(\langle \mathcal{S}\rangle) = a(\sigma(D)) - b(\sigma(D)) +2|\sigma(D)|-2$. Then  $\maxdeg(\langle \overline{\mathcal{S}}_{a}\rangle) = a(\sigma(D))n^{2} - b(\sigma(D))n^{2} +2|\sigma(D)|n-2$ where $\overline{\mathcal{S}}_{a}$ is the skein element that corresponds to the original state $\mathcal{S}$ as in Remark \ref{notation}(2)(b). This is true because decorating any diagram with $1_{n}$ gives a  diagram of $n$-parallels of the link. Obviously,  the  number of components of this new link is a  multiple of $n$.  
\end{rem}

\begin{lemm}\label{extreme}
Let $D$ be a link diagram and let $E$ be the link diagram obtained from $D$ by taking the $n$-parallel copies of $D$. For each extreme $A$-state, there are  $(2^{n}-1)^{k}$-extreme $A$-states of $E$, where $k$ is the number of resolved crossings using $B$-resolution in the given extreme $A$-state. Moreover, any extreme $A$-state of $E$ is obtained from this correspondence to some extreme $A$-state of $D$.
\end{lemm}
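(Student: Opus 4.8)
The plan is to characterize extreme $A$-states combinatorially and then reduce the entire count to a purely local computation at the $n$-cable of a single crossing. First I would record the following reformulation, valid for any diagram $F$: a Kauffman state $\sigma(F)$ is an extreme $A$-state if and only if $|\sigma(F)| = |\sigma_{A}(F)| + b(\sigma(F))$, where $b(\sigma(F))$ is the number of crossings resolved by a $B$-resolution. This follows by expanding $\maxdeg(\sigma_{A}(F)) - \maxdeg(\sigma(F))$ with the definitions of $\maxdeg$ together with $a(\sigma(F)) + b(\sigma(F)) = c(F)$, which gives $\maxdeg(\sigma_{A}(F)) - \maxdeg(\sigma(F)) = 2b(\sigma(F)) + 2(|\sigma_{A}(F)| - |\sigma(F)|)$. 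Since $\maxdeg(\sigma(F)) \leq \maxdeg(\sigma_{A}(F))$ by \cite[Corollary\,3.16]{KL}, the extreme $A$-states are exactly those that maximize $|\sigma(F)| - b(\sigma(F))$, i.e. those that split off a new circle at every $B$-resolution relative to the all-$A$ state.

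Next I would pass from $E$ to its crossing bundles. Each crossing $c$ of $D$ is replaced in $E$ by a tangle $T_{c}$ consisting of the $n^{2}$ crossings of the $n$-cable, and a state $\tau$ of $E$ restricts to a resolution of each $T_{c}$. The aim of this step is to show that the degree defect $\Delta(\tau) := |\sigma_{A}(E)| + b(\tau) - |\tau| \geq 0$ splits as a sum $\sum_{c} \Delta_{c}(\tau)$ of nonnegative local contributions, one per bundle, so that $\tau$ is an extreme $A$-state of $E$ if and only if the resolution of every bundle $T_{c}$ is locally extreme. The additivity comes from the fact that, away from $T_{c}$, the remaining cables form the $n$-parallel skeleton, so changing the resolution inside one bundle alters the global circle count only through that bundle; an induction that switches the bundles away from their all-$A$ resolution one at a time, applying \cite[Corollary\,3.16]{KL} locally, makes this precise.

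The heart of the argument is the single-bundle classification. Here I would analyze $T_{c}$ as the element of the Temperley--Lieb algebra $TL_{2n}$ obtained by resolving its $n^{2}$ crossings, and show that exactly $2^{n}$ of these resolutions are locally extreme: precisely one of them is the all-$A$ resolution, which reproduces the $n$-parallel smoothing (the ``$A$-type'') and projects $c$ to an $A$-resolution, while the remaining $2^{n}-1$ resolutions realize the cabled $B$-smoothing together with the maximal number of internal circles (the ``$B$-type'') and project $c$ to a $B$-resolution. I expect to establish the count by induction on $n$: peeling off the outermost of the $n$ cabled strands contributes an independent binary choice of how that strand meets the cup--cap pattern of the remaining $(n-1)$-cable, subject to the single constraint that the $n$ choices do not all simultaneously reproduce the all-$A$ pattern; that constraint removes exactly one configuration, leaving $2^{n}-1$ at a $B$-bundle and $1$ at an $A$-bundle. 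The explicit check for $n=2$, where the four-crossing cable has three $B$-type extreme resolutions and one $A$-type, anchors the induction.

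Finally I would assemble the pieces. By the localization step, an extreme $A$-state $\tau$ of $E$ is exactly a choice of a locally extreme resolution at each bundle, and recording the $A$-type/$B$-type label of each bundle defines a state $\sigma$ of $D$. A bookkeeping computation relating $|\tau|$ and $b(\tau)$ to $|\sigma|$ and $b(\sigma)$, in which each $A$-bundle cables a circle of $\sigma$ into $n$ parallel circles and each $B$-bundle contributes the internal circles of its chosen $B$-type configuration, shows that $\tau$ satisfies $|\tau| = |\sigma_{A}(E)| + b(\tau)$ if and only if $\sigma$ satisfies $|\sigma| = |\sigma_{A}(D)| + b(\sigma)$, i.e. if and only if $\sigma$ is an extreme $A$-state of $D$. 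Since the $B$-type choices at the $k = b(\sigma)$ many $B$-bundles are independent with $2^{n}-1$ options each while the $A$-bundles admit a unique extension, each extreme $A$-state $\sigma$ of $D$ corresponds to exactly $(2^{n}-1)^{k}$ extreme $A$-states of $E$, and every extreme $A$-state of $E$ arises this way. The main obstacle is the single-bundle classification together with the additivity of the defect over bundles, as everything else reduces to bookkeeping once these two local facts are in hand.
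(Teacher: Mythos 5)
Your opening reformulation is correct (a state $\sigma$ is an extreme $A$-state exactly when $|\sigma| = |\sigma_{A}| + b(\sigma)$, i.e.\ when every $B$-resolution splits a circle), and your overall architecture (localize to crossing bundles, classify the local possibilities, assemble with bookkeeping) is in the same spirit as the paper's proof. However, the single-bundle classification that you yourself identify as the heart of the argument is false, and the $n=2$ check you propose as the anchor of the induction actually refutes it. Take a crossing $x$ of $D$ whose $B$-smoothing splits a circle of the all-$A$ state, and label the four crossings of its $2$-cable bundle as $u_{i}\times o_{j}$, $i,j\in\{1,2\}$. A direct trace of the smoothings shows that the extreme resolutions of this bundle are precisely: the all-$A$ resolution, and the three resolutions obtained by $B$-smoothing a nonempty subset of the two \emph{diagonal} crossings $u_{1}\times o_{1}$ and $u_{2}\times o_{2}$ (these are the crossings lying, with both of their smoothing arcs, on the innermost parallel copy of the split circle --- which is exactly the paper's local description). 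None of these three resolutions has the connectivity of the cabled $B$-smoothing: their underlying tangle is a hybrid in which only the innermost layer follows the $B$-pattern while the outer layer retains the $A$-pattern, carrying zero or one internal circle according as one or both diagonal crossings are switched. Meanwhile the unique bundle resolution that \emph{does} realize the cabled $B$-smoothing, namely the all-$B$ resolution, is not extreme: it makes $b=4$ switches but gains only $2$ circles, so its defect is $2$ (in general $n^{2}-n$). So your claim that the $2^{n}-1$ ``$B$-type'' local states ``realize the cabled $B$-smoothing together with the maximal number of internal circles'' is wrong.

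This sinks the proposed counting mechanism as well. The picture of ``an independent binary choice per cabled strand of $A$- versus $B$-pattern, minus the all-$A$ configuration'' does not describe the truth: the configuration with the outer layer in the $B$-pattern and the inner layer in the $A$-pattern is not even planar, and the genuine multiplicity $2^{n}-1$ does not arise from $2^{n}-1$ distinct connectivities but from the nonempty subsets of the $n$ diagonal crossings, many of which produce the \emph{same} tangle and are distinguished only by their number of closed circles (this circle bookkeeping is what the paper's Lemma 2.9 later exploits). There is also a secondary gap in your localization step: a decomposition $\Delta(\tau)=\sum_{c}\Delta_{c}(\tau)$ with $\Delta_{c}$ depending only on the resolution of the bundle $T_{c}$ cannot exist, because whether a given bundle resolution is compatible with extremality depends on the closure, i.e.\ on whether the underlying crossing of $D$ splits a circle in the state induced by the other bundles; your final requirement that the induced state of $D$ be extreme can absorb this, but only after the local lists are corrected. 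Once you replace your classification by the correct one --- at each $B$-crossing of an extreme state of $D$, switch a nonempty subset of the $n$ crossings sitting on the innermost parallel copy of the split circle, each switch adding exactly one circle --- and run your induction over the $B$-crossings one at a time (using that switching any $B$-crossing of an extreme state back to $A$ again yields an extreme state), your outline becomes essentially the paper's proof.
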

\begin{proof}
 We note that the state $\overline{\mathcal{S}}_{A}$ is the $n$-parallel copies of the state $\mathcal{S}_{A}$ obtained from the original diagram before decoration. The claim follows easily if the number $k$ to obtain an extreme $A$-state of the the diagram $D$ is zero, therefore we can assume that $k> 0$. Now any state circle in $\mathcal{S}_{A}$ that splits into two state circles after replacing an $A$-resolution by a $B$-resolution at some resolved crossing corresponds to the inner most circle of the $n$-state circles in $\overline{\mathcal{S}}_{A}$ of the diagram $E$ corresponding to such state circle in $\mathcal{S}_{A}$. This state circle has $n$-resolved crossings using $A$-resolution corresponding to the resolved crossing in $\mathcal{S}_{A}$. It is not too hard to see that replacing the $A$-resolution by $B$-resolution of a nonempty collection of these crossings increases the number of state circles by the same number of crossing in this collection. Therefore, we obtain $(2^{n}-1)$-extreme $A$-states of $E$ corresponding to one extreme $A$-state of $D$. We apply induction on $k$ and the fact that any extreme $A$-state of $(k+1)$-resolved crossings using $B$-resolution can be obtained from an extreme $A$-state of $k$-resolved crossings using $B$-resolution and then replace an $A$-resolution by a $B$-resolution at some other resolved crossing using $A$-resolution.

\end{proof}

\begin{lemm}\label{newsimple}
Let $D$ be a link diagram and let $E$ be the link diagram obtained from $D$ by taking the $n$-parallel copies of $D$. Then the absolute value of the coefficient of the extreme monomial $A^{c(D) + 2|\sigma_{A}(D)|-2}$ in $\langle D\rangle$ is equal the absolute value of the coefficient of the extreme monomial $A^{c(E) + 2|\sigma_{A}(E)|-2}$ in $\langle E\rangle$.
\end{lemm}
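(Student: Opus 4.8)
The plan is to read each extreme coefficient as a signed count of extreme $A$-states and then transport that count through the correspondence of Lemma \ref{extreme}. Recall that every Kauffman state $\sigma$ of a diagram contributes $A^{a(\sigma)-b(\sigma)}(-A^{2}-A^{-2})^{|\sigma|-1}$ to the bracket, so its top monomial is $(-1)^{|\sigma|-1}A^{\maxdeg(\sigma)}$. Since $\maxdeg(\sigma)\le\maxdeg(\sigma_{A}(D))$ for every state with equality exactly for the extreme $A$-states (by \cite[Corollary\,3.16]{KL}), the coefficient of $A^{c(D)+2|\sigma_{A}(D)|-2}$ in $\langle D\rangle$ equals $\sum_{\sigma}(-1)^{|\sigma|-1}$, summed over all extreme $A$-states $\sigma$ of $D$; the identical description holds for $E$, since $c(E)+2|\sigma_{A}(E)|-2$ is likewise its top degree.

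Writing $N=|\sigma_{A}(D)|$ and letting $k(\sigma)$ be the number of crossings $B$-resolved in an extreme $A$-state $\sigma$ of $D$, the defining relation $\maxdeg(\sigma)=\maxdeg(\sigma_{A}(D))$ forces $|\sigma|=N+k(\sigma)$, so the extreme coefficient of $\langle D\rangle$ is $(-1)^{N-1}\sum_{\sigma}(-1)^{k(\sigma)}$. For $E$ I would group its extreme $A$-states $\tau$ according to the extreme $A$-state $\sigma$ of $D$ to which they correspond under Lemma \ref{extreme}. By Remark \ref{new3} the all-$A$ state of $E$ has $nN$ circles, and within each of the $k(\sigma)$ blocks of $n$ relevant crossings one may $B$-resolve any nonempty subset; by the local analysis in Lemma \ref{extreme}, $B$-resolving a $j$-element subset raises the circle count by exactly $j$ and keeps the state extreme. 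Hence a state $\tau$ over $\sigma$ determined by subset sizes $(j_{1},\dots,j_{k(\sigma)})$ with each $1\le j_{i}\le n$ has $|\tau|=nN+\sum_{i}j_{i}$ and arises with multiplicity $\prod_{i}\binom{n}{j_{i}}$, consistent with the total count $(2^{n}-1)^{k(\sigma)}$.

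The extreme coefficient of $\langle E\rangle$ therefore equals
\begin{equation*}
\sum_{\sigma}\ \sum_{1\le j_{1},\dots,j_{k(\sigma)}\le n}\Bigl(\prod_{i=1}^{k(\sigma)}\binom{n}{j_{i}}\Bigr)(-1)^{\,nN+\sum_{i}j_{i}-1}
=(-1)^{nN-1}\sum_{\sigma}\ \prod_{i=1}^{k(\sigma)}\Bigl(\sum_{j=1}^{n}\binom{n}{j}(-1)^{j}\Bigr).
\end{equation*}
Using $\sum_{j=1}^{n}\binom{n}{j}(-1)^{j}=(1-1)^{n}-1=-1$, each inner product collapses to $(-1)^{k(\sigma)}$, so the extreme coefficient of $\langle E\rangle$ is $(-1)^{nN-1}\sum_{\sigma}(-1)^{k(\sigma)}$. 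Comparing with the expression for $D$, the two sums differ only by the overall signs $(-1)^{N-1}$ and $(-1)^{nN-1}$, whence their absolute values coincide (and both vanish simultaneously if $\sum_{\sigma}(-1)^{k(\sigma)}=0$), which is precisely the assertion.

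The substantive point—and the only place where the geometry enters—is the claim that the single $-1$ sign contributed by a $B$-resolved crossing of $\sigma$ is reproduced, after spreading over its block of $n$ crossings in $E$, by the signed binomial weight that telescopes to $-1$. I expect this to be the main obstacle, and it rests entirely on the circle-count bookkeeping of Lemma \ref{extreme} (each additional $B$-resolution in a block increments $|\tau|$ by one); once that is in hand, the remainder is the elementary binomial identity and sign tracking above.
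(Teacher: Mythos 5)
Your proof is correct and follows essentially the same route as the paper: both read the extreme coefficient as the signed count $\sum_{\sigma}(-1)^{|\sigma|-1}$ over extreme $A$-states and transport it through the block-by-block correspondence of Lemma \ref{extreme}. Your explicit identity $\sum_{j=1}^{n}\binom{n}{j}(-1)^{j}=-1$ per block is precisely the parity bookkeeping that the paper asserts more loosely (that the even/odd difference among the $(2^{n}-1)^{k}$ corresponding states is $\pm 1$ with a sign independent of the state), so your write-up is, if anything, a tighter version of the paper's argument.
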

\begin{proof}
It is easy to see that the coefficient of the extreme monomial $A^{c(D) + 2|\sigma_{A}(D)|}$ in $\langle D\rangle$ is given by $ \sum\limits_{\sigma(D)}(-1)^{|\sigma(D)|-1}$. Now we analyze the correspondence between the extreme $A$-states of $D$ and the extreme $A$-states of $E$ in Lemma \ref{extreme} and their parities. We note that each extreme $A$-state of $D$ corresponds to the odd number  $m = \left(\sum\limits_{i=1}^{n} \binom{n}{i}\right)^{k}$ of extreme $A$-states of $E$ that splits into states of even and odd parity.  Among these states, the difference between the number of even extreme $A$-states and odd extreme $A$-states of $E$ corresponding to some fixed extreme $A$-state of $D$ is plus or minus one. Also, this difference is independent of such extreme $A$-state of $D$ and it depends only on the the parity of $n$. 
Therefore, we obtain 
\begin{align*} 
\big|\sum\limits_{\sigma(E)} (-1)^{|\sigma(E)|-1}\big| & = \big| \sum\limits_{\sigma(D)}\left(\sum\limits_{i=1}^{\frac{m+1}{2}} (-1)^{|\sigma(D)|-1} - \sum\limits_{i=1}^{\frac{m-1}{2}} (-1)^{|\sigma(D)|-1} \right) \big| \\ & = \big|\sum\limits_{\sigma(D)}(-1)^{|\sigma(D)|-1}\big|,
\end{align*}
 where the summations are over the extreme $A$-states of $D$ and $E$.
\end{proof}

Now, we introduce the main result of this paper which states that if the equality in Lemma \ref{basic} holds then the given link $L$ is adequate.

\begin{thm}\label{main}
Let $L$ be a link such that $\breadth(V_{L}(t)) = c(L) -g_{T}(L)$, then $L$ is adequate.
\end{thm}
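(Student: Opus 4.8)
The plan is to reduce the statement to the characterization of \cite[Theorem\,1.1]{KL}: a link with Jones diameter $jd_{L} = 2c(L)$ is adequate. Since one always has $jd_{L}\le 2c(L)$, the entire task becomes producing a diagram whose parallels force the colored Jones degrees to grow at the maximal quadratic rate $2c(L)n^{2}$.

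First I would use Lemma \ref{basic} to locate a good diagram. For every diagram $D$ one has $\breadth(V_{L}(t))\le c(D)-g_{T}(D)$, so the hypothesis gives $c(L)-g_{T}(L)=\min_{D}\bigl(c(D)-g_{T}(D)\bigr)$. Choosing $D$ to be a minimal crossing diagram, so $c(D)=c(L)$ and $g_{T}(D)\ge g_{T}(L)$, the displayed minimality forces $g_{T}(D)\le g_{T}(L)$, hence $g_{T}(D)=g_{T}(L)$; thus a single diagram $D$ realizes both $c(L)$ and $g_{T}(L)$ and satisfies $\breadth(V_{L}(t))=c(D)-g_{T}(D)$. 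Substituting $2g_{T}(D)=c(D)+2-|\sigma_{A}(D)|-|\sigma_{B}(D)|$ and using $\breadth(\langle D\rangle)=4\breadth(V_{L}(t))$, I would conclude that $\breadth(\langle D\rangle)=2c(D)+2|\sigma_{A}(D)|+2|\sigma_{B}(D)|-4$, i.e. the Lickorish bound from Lemma \ref{basic} is attained. Because one has separately $\maxdeg(\langle D\rangle)\le c(D)+2|\sigma_{A}(D)|-2$ and $\mindeg(\langle D\rangle)\ge -c(D)-2|\sigma_{B}(D)|+2$, equality in the span forces equality in each: the coefficients of the extreme monomials $A^{c(D)+2|\sigma_{A}(D)|-2}$ and $A^{-c(D)-2|\sigma_{B}(D)|+2}$ of $\langle D\rangle$ are both nonzero.

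Next I would pass to the $n$-parallel $E$ of $D$ and track the extreme degrees of the colored Jones polynomial, computed from the Jones--Wenzl decorated diagram. Expanding as in Equation (\ref{basicequation}) and invoking Lemma \ref{new1}, every off-diagonal contribution $f_{*}\langle \overline{\mathcal{S}}_{*}\rangle$ has strictly smaller maximal degree than the leading term coming from $\overline{\mathcal{S}}_{A}$, so the top degree of the decorated bracket is governed by the parallel $E$ alone, and by the mirror statement so is the bottom degree. By Lemma \ref{newsimple}, applied to $D$ for the $A$-side and to its mirror image for the $B$-side, the two extreme coefficients of $\langle E\rangle$ have the same absolute values as those of $\langle D\rangle$, which we just showed are nonzero; hence these extreme monomials survive and $\breadth(\langle E\rangle)=2c(E)+2|\sigma_{A}(E)|+2|\sigma_{B}(E)|-4\ge 2c(E)-4$. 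Since $c(E)=n^{2}c(L)$, the breadth of the decorated bracket, and therefore of $J_{L}(n)(t)$ up to the writhe normalization which only shifts degrees, grows at least like $2n^{2}c(L)$; as $jd_{L}$ is the leading coefficient of this breadth and always satisfies $jd_{L}\le 2c(L)$, we obtain $jd_{L}=2c(L)$, whence $L$ is adequate by \cite[Theorem\,1.1]{KL}.

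The main obstacle is the third step: guaranteeing that the top and bottom monomials of the parallel $E$ are not killed once the Jones--Wenzl projectors are expanded into the colored Jones polynomial. This is exactly what Lemmas \ref{new1}, \ref{extreme} and \ref{newsimple} are arranged to control, with Lemma \ref{new1} confining all projector corrections to strictly lower degree, Lemma \ref{extreme} organizing the extreme states of the parallel, and Lemma \ref{newsimple} showing that the associated signed count of extreme states does not vanish. The remaining work is bookkeeping: checking that attaining the Lickorish bound indeed forces nonvanishing of both extreme coefficients of $\langle D\rangle$ (so that Lemma \ref{newsimple} has nonzero input), and confirming that the leading coefficient $2c(L)$ of the breadth corresponds, under the definition of the Jones diameter in \cite{KL}, to the equality $jd_{L}=2c(L)$.
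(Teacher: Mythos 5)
Your proposal is correct and follows essentially the same route as the paper: locate a minimal-crossing diagram $D$ on which the hypothesis forces both extreme coefficients $A^{c(D)+2|\sigma_{A}(D)|-2}$ and $A^{-c(D)-2|\sigma_{B}(D)|+2}$ of $\langle D\rangle$ to be nonzero, propagate these to the $n$-parallel via Lemmas \ref{new1}, \ref{extreme}, \ref{newsimple} and Remark \ref{new3}, and deduce $jd_{L}=2c(L)$ so that \cite[Theorem\,1.1]{KL} applies. The only cosmetic differences are that you extract the equality $g_{T}(D)=g_{T}(L)$ directly from Lemma \ref{basic} rather than via the paper's parity bookkeeping with the integers $i,j$, and you conclude with the lower bound on the degree growth combined with the inequality $jd_{L}\le 2c(L)$ instead of computing the cluster point $2c(D)$ exactly, neither of which changes the substance of the argument.
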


\begin{proof}
We choose a diagram $D$ of the link $L$ such that $c(L) = c(D)$. According to the result in \cite{T1}, it is necessary and sufficient to show that $D$ is an adequate diagram for $L$ to be adequate. We know that $\breadth (\langle D\rangle) = 4c(D) - 4g_{T}(L)$ for any link diagram $D$ of $L$ since the breadth of the Kauffman bracket is a link invariant. Also, we know that $M= \maxdeg(\langle D\rangle) = c(D) +  2|\sigma_{A}(D)| - 4i -2$, and  $m=\mindeg(\langle D\rangle) = -c(D) -  2|\sigma_{B}(D)| + 4j+2$ for some nonnegative integers $i, j \geq 0$. Hence, we obtain that $M-m =\breadth (\langle D\rangle) = 2c(D) + 2|\sigma_{A}(D)| + 2|\sigma_{B}(D)| -4i -4j-4= 4c(D) - 4g_{T}(L)$ or simply $|\sigma_{A}(D)| + |\sigma_{B}(D)| = c(D) + 2 - 2g_{T}(L) + 2i + 2j$. Also, we have $|\sigma_{A}(D)| + |\sigma_{B}(D)| \leq  c(D) + 2 - 2g_{T}(L)$ from the fact that $g_{T}(L) \leq g_{T}(D) = \frac{1}{2}\left(c(D)+2-|\sigma_{A}(D)|-|\sigma_{B}(D)|\right)$. This implies that $i = j = 0$. Thus, we conclude that the monomials $A^{c(D)+2|\sigma_{A}(D)|-2}$ and $A^{-c(D)-2|\sigma_{B}(D)|+2}$ have nonzero coefficients in $ \langle D\rangle$.  As a consequence of Lemma \ref{newsimple}, Lemma \ref{new1} and Remark \ref{new3}, we obtain $\maxdeg(\langle D^{n}\rangle) = \maxdeg(\langle E \rangle) =c(E) + 2|\sigma_{A}(E)| -2 =c(D)n^{2} + 2|\sigma_{A}(D)|n-2$, where $E$ is the link diagram obtained from $D$ by taking $n$-parallel copies of $D$. By applying a similar argument on the mirror image of $D$, we also conclude that $\mindeg(\langle D^{n}\rangle) = \mindeg(\langle E \rangle) = -c(D)n^{2} - 2|\sigma_{B}(D)|n+2$. Therefore, we obtain
\[ \maxdeg(J_{L}(n)(t)) = c(D)\frac{(n-1)^{2}}{4} + |\sigma_{A}(D)|\frac{(n-1)}{2} + w(D)\dfrac{n^{2}-1}{4} - \frac{1}{2},\]
 and
 \[ \mindeg(J_{L}(n)(t)) = -c(D)\frac{(n-1)^{2}}{4} - |\sigma_{B}(D)|\frac{(n-1)}{2} + w(D)\dfrac{n^{2}-1}{4} + \frac{1}{2}.\]

Now, we can easily see that the set
\begin{align*}
\Bigl\{4n^{-2}\Big(\maxdeg(J_{L}(n)(t)) - & \mindeg(J_{L}(n)(t))\Bigr)\Bigr\}  \\ = & \Bigl\{2c(D)\frac{(n-1)^{2}}{n^{2}} +2|\sigma_{A}(D)|\frac{(n-1)}{n^{2}}+2|\sigma_{B}(D)|\frac{(n-1)}{n^{2}} -\frac{4}{n^{2}}\Bigr\}
\end{align*}
 has a single cluster point, which is  $2c(D)$. Thus, we get $jd_{L} = 2c(D) =2c(L)$ and this proves that $L$ is adequate according to the characterization of adequate links given in \cite[Theorem\,1.1]{KL}.

\end{proof}
\begin{rem}
Combining Theorem \ref{main} and the result of \cite[Theorem\,3.2]{A}, we conclude  that the condition $\breadth(V_{L}(t)) = c(L) -g_{T}(L)$ is a necessary and sufficient condition for a link to be adequate.
\end{rem}

\begin{rem}
It is worth mentioning here   that the main result in Theorem \ref{main} is motivated by the work of Lickorish and  Thistlethwaite in  \cite{LT}. This result along with some of its  consequences discussed in  the next section can be considered as a natural generalization of certain  results in \cite{LT}.
\end{rem}

Now, let us illustrate the use of Theorem \ref{main} by an example.
\begin{ex}\label{example}
The 10-crossing knots $10_{145}$ and $10_{161}$ have breadth of the Jones polynomial  equal to eight and Turaev genus one \cite{knotinfo}. So they do not satisfy the condition of Theorem \ref{main}.  In conclusion, the knots $10_{145}$ and $10_{161}$ are  not adequate.
\end{ex}

\section{Consequences and Applications}
In this section, we shall discuss some consequences and applications of our main result.

\begin{coro}
Let $L$ be a link such that $\breadth(V_{L}(t)) = c(L) -g_{T}(L)$, then $jd_{L} = 2c(L)$.
\end{coro}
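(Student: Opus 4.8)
The plan is to observe that the desired equality is already isolated inside the proof of Theorem \ref{main}, so I would simply reproduce that degree computation and stop one step earlier, i.e.\ before the appeal to \cite[Theorem\,1.1]{KL}. Concretely, I would begin by fixing a minimal diagram $D$ of $L$ with $c(D)=c(L)$. Combining the hypothesis $\breadth(V_{L}(t))=c(L)-g_{T}(L)$ with the general degree expressions $\maxdeg(\langle D\rangle)=c(D)+2|\sigma_{A}(D)|-4i-2$ and $\mindeg(\langle D\rangle)=-c(D)-2|\sigma_{B}(D)|+4j+2$ (valid for some nonnegative integers $i,j$), together with the inequality of Lemma \ref{basic}, I would force $i$ and $j$ to vanish. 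This shows that the two extreme monomials $A^{c(D)+2|\sigma_{A}(D)|-2}$ and $A^{-c(D)-2|\sigma_{B}(D)|+2}$ occur with nonzero coefficients in $\langle D\rangle$.

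Next I would pass to the $n$-parallel cable $E=D^{n}$ and track the top and bottom degrees through the Jones--Wenzl expansion in Equation (\ref{basicequation}). Here Lemma \ref{new1} guarantees that the projector-correction terms $f_{*}\langle\overline{\mathcal{S}}_{*}\rangle$ have strictly smaller maximum degree than $A^{c}\langle\overline{\mathcal{S}}_{A}\rangle$ and hence cannot cancel the leading contribution; Lemma \ref{newsimple} shows that the leading coefficient survives cabling, so in particular it remains nonzero; and Remark \ref{new3} supplies the degree bookkeeping for the identity-decorated cable. Taken together these give $\maxdeg(\langle D^{n}\rangle)=c(D)n^{2}+2|\sigma_{A}(D)|n-2$, and the same argument applied to the mirror image of $D$ yields $\mindeg(\langle D^{n}\rangle)=-c(D)n^{2}-2|\sigma_{B}(D)|n+2$.

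Converting to the colored Jones polynomial via the writhe normalization produces the explicit quadratics in $n$ for $\maxdeg(J_{L}(n)(t))$ and $\mindeg(J_{L}(n)(t))$ displayed in the proof of Theorem \ref{main}. I would then form the sequence $4n^{-2}\bigl(\maxdeg(J_{L}(n)(t))-\mindeg(J_{L}(n)(t))\bigr)$, read off its single cluster point $2c(D)$, and conclude $jd_{L}=2c(D)=2c(L)$ directly from the definition of the Jones diameter recalled following \cite[Section\,3.4]{KL}.

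The main (and essentially only) substantive obstacle is precisely the non-cancellation of the extreme degrees under cabling, namely the assertion that the top and bottom degrees of $\langle D^{n}\rangle$ are \emph{exactly} $c(D)n^{2}+2|\sigma_{A}(D)|n-2$ and $-c(D)n^{2}-2|\sigma_{B}(D)|n+2$. This is exactly the content of Lemmas \ref{new1} and \ref{newsimple}, whose role is to rule out cancellation both from the Jones--Wenzl correction terms and from the parity cancellations among the extreme states. Since these lemmas are already in hand, the corollary follows as a formal consequence: it is precisely the intermediate conclusion $jd_{L}=2c(L)$ reached in the proof of Theorem \ref{main} just prior to invoking the characterization of \cite[Theorem\,1.1]{KL}.
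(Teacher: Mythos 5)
Your proposal is correct and is essentially the paper's own argument: the paper states this corollary without a separate proof precisely because the equality $jd_{L} = 2c(L)$ is the intermediate conclusion reached inside the proof of Theorem \ref{main} (via Lemmas \ref{new1}, \ref{newsimple} and Remark \ref{new3}), just before the appeal to \cite[Theorem\,1.1]{KL}. Your plan of reproducing that degree computation and stopping one step earlier matches the intended proof exactly.
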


\begin{coro}\label{main2}
Let $L$ be a link such that $\breadth(V_{L}(t)) = c(L)- 1 - g_{T}(L)$, then $L$ is semi-adequate.
\end{coro}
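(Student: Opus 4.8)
\textbf{Proof proposal for Corollary \ref{main2}.}

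The plan is to mimic the logic of Theorem \ref{main}, but now starting from the weaker hypothesis $\breadth(V_{L}(t)) = c(L) - 1 - g_{T}(L)$, and to show that this forces exactly one of the two adequacy conditions rather than both. I would begin as in the main theorem by choosing a minimal diagram $D$ with $c(D) = c(L)$ and writing $M = \maxdeg(\langle D\rangle) = c(D) + 2|\sigma_{A}(D)| - 4i - 2$ and $m = \mindeg(\langle D\rangle) = -c(D) - 2|\sigma_{B}(D)| + 4j + 2$ for nonnegative integers $i,j$. Translating the breadth hypothesis gives $M - m = 4c(D) - 4 - 4g_{T}(L)$, which after substituting $2g_{T}(L) = c(D) + 2 - |\sigma_{A}(D)| - |\sigma_{B}(D)|$ (using that $g_{T}(L) = g_{T}(D)$ for the minimal diagram, which must be argued) yields the key integer equation $i + j = 1$. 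The crucial structural point is that $(i,j)$ must be either $(1,0)$ or $(0,1)$: in the first case the extreme $B$-monomial $A^{-c(D)-2|\sigma_{B}(D)|+2}$ survives with nonzero coefficient while the extreme $A$-monomial does not, and in the second case the roles are reversed.

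Next I would carry out the colored-Jones degree computation exactly as in Theorem \ref{main}, but only on the side whose extreme monomial survives. Suppose $(i,j) = (0,1)$, so the extreme $A$-monomial $A^{c(D)+2|\sigma_{A}(D)|-2}$ has nonzero coefficient in $\langle D\rangle$. Then Lemma \ref{newsimple}, Lemma \ref{new1}, and Remark \ref{new3} apply verbatim to give $\maxdeg(\langle E\rangle) = c(D)n^{2} + 2|\sigma_{A}(D)|n - 2$ for the $n$-parallel diagram $E$, and hence the sharp top-degree formula for $\maxdeg(J_{L}(n)(t))$ appearing in the proof of Theorem \ref{main}. This is precisely the data that \cite[Theorem 1.1]{KL} (or its one-sided refinement) uses to certify $A$-adequacy of $D$, and by \cite{T1} $A$-adequacy of a minimal diagram is equivalent to $A$-adequacy of the link. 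The symmetric argument applied to the mirror image handles the case $(i,j) = (1,0)$ and yields $B$-adequacy. In either case $L$ is semi-adequate, which is the desired conclusion.

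The main obstacle, and the step requiring the most care, is that the machinery of Section 2 is set up to detect \emph{two-sided} sharpness and to conclude that the Jones diameter equals $2c(L)$; here I only have sharpness on one side, so I cannot invoke the full Jones-diameter characterization of \cite[Theorem 1.1]{KL}. I must instead appeal to the one-sided statement: nonvanishing of the extreme coefficient at the $A$-end, propagated through the cabling lemmas to a sharp value of $\maxdeg(J_{L}(n)(t))$, is exactly the condition that certifies the diagram is $A$-adequate. Concretely, the coefficient computed in Lemma \ref{newsimple} is $\big|\sum_{\sigma} (-1)^{|\sigma(D)|-1}\big|$ over extreme $A$-states, and $A$-adequacy of $D$ is equivalent to this extreme $A$-state being unique (so the sum is a single nonzero term); the cabling argument shows this survives to all colors, which is what \cite{KL} translates into the adequacy certificate. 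A secondary point to verify is that the minimal diagram $D$ realizes $g_{T}(L) = g_{T}(D)$, so that the substitution forcing $i + j = 1$ is legitimate; this follows from the inequality $g_{T}(L) \le g_{T}(D)$ together with the breadth bound of Lemma \ref{basic} pinning the diagram's contribution, exactly as the $i = j = 0$ deduction was forced in Theorem \ref{main}.
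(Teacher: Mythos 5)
Your proposal follows essentially the same route as the paper: pass to a minimal diagram, do the degree bookkeeping on $\langle D\rangle$ to force exactly one of the two extreme monomials to survive, propagate that one-sided sharpness through Lemma \ref{newsimple}, Lemma \ref{new1} and Remark \ref{new3} to the cables, and conclude by a one-sided Kalfagianni--Lee criterion. Your instinct at the ``main obstacle'' is exactly right: the paper does not invoke the Jones-diameter theorem here, but instead cites \cite[Theorem\,2.4]{KL}, which converts the growth rates $\maxdeg(J_{L}(n)(t)) = \frac{y(D)}{2}n^{2} + \mathcal{O}(n)$, respectively $\mindeg(J_{L}(n)(t)) = -\frac{x(D)}{2}n^{2} + \mathcal{O}(n)$, into semi-adequacy; so the concluding step you describe (without the precise citation) is the one actually used.

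The genuine gap is in the step forcing $i+j=1$. You justify it by substituting $g_{T}(D)=g_{T}(L)$, claiming this follows from $g_{T}(L)\le g_{T}(D)$ together with Lemma \ref{basic}. It does not: Lemma \ref{basic} and the hypothesis give only $c(D)-1-g_{T}(L)\le c(D)-g_{T}(D)$, i.e. $g_{T}(D)\le g_{T}(L)+1$. The bookkeeping in fact yields $g_{T}(D)+i+j=g_{T}(L)+1$, which leaves two cases: your case $g_{T}(D)=g_{T}(L)$ with $i+j=1$, and the case $g_{T}(D)=g_{T}(L)+1$ with $i=j=0$, in which \emph{both} extreme monomials survive and your dichotomy $(i,j)\in\{(1,0),(0,1)\}$ fails. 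The second case must be excluded by a separate argument: there the two-sided machinery of Theorem \ref{main} gives $jd_{L}=2c(L)$, hence $L$ is adequate by \cite[Theorem\,1.1]{KL}, and then \cite[Theorem\,3.2]{A} forces $\breadth(V_{L}(t))=c(L)-g_{T}(L)$, contradicting the hypothesis. (In fairness, the paper's own proof passes from $2i+2j\le 2$ to $2i+2j=2$ without comment, eliding the same case; but your stated justification is false as written, so the exclusion above is needed to close your argument.) Two smaller points: the appeal to \cite{T1} is superfluous, since semi-adequacy only requires \emph{some} diagram to be $A$- or $B$-adequate; and a nonvanishing extreme coefficient does not imply the extreme $A$-state is unique, which is precisely why the conclusion must pass through the cabled invariants and \cite{KL} rather than be read off the diagram directly.
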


\begin{proof}
Assume that $D$ is a diagram of $L$ with $c(L) = c(D)$. It is easy to see that $\breadth (\langle D\rangle) = 4c(D)-4g_{T}(L)-4, M= \maxdeg(\langle D\rangle) = c(D) +  2 \sigma_{A}(D)| - 4i-2$ and  $m=\mindeg(\langle D\rangle) = -c(D) -  2|\sigma_{B}(D)| + 4j+2$ for some nonnegative integers $i, j \geq 0$.  Thus, we obtain: \[M-m =\breadth (\langle D\rangle) = 2c(D) + 2|\sigma_{A}(D)| + 2|\sigma_{B}(D)| -4i -4j -4 = 4c(D)-4g_{T}(L) -4.\]

This implies that $|\sigma_{A}(D)| + |\sigma_{B}(D)| = c(D) + 2i + 2j-2g_{T}(L)$. From the fact that $g_{T}(L) \leq g_{T}(D) = \frac{1}{2}\left(c+2-|\sigma_{A}(D)|-|\sigma_{B}(D)|\right)$, we get $|\sigma_{A}(D)| + |\sigma_{B}(D)| \leq  c(D) + 2 - 2g_{T}(L)$. Hence, we conclude that $2i+2j=2$ or simply that either $i=1$ or $j=1$ but not both. Therefore, either the monomial $A^{c(D)+2|\sigma_{A}(D)|-2}$ or the monomial $A^{-c(D)-2|\sigma_{B}(D)|+2}$ has nonzero coefficient in $ \langle D\rangle$. As a consequence of Lemma \ref{newsimple}, Lemma \ref{new1} and Remark \ref{new3}, we obtain $\maxdeg(\langle D^{n}\rangle) = \maxdeg(\langle E \rangle) =c(E) + 2|\sigma_{A}(E)| -2 =c(D)n^{2} + 2|\sigma_{A}(D)|n-2$, or $\mindeg(\langle D^{n}\rangle) = \mindeg(\langle E \rangle) =c(E) - 2|\sigma_{B}(E)| +2 =-c(D)n^{2} - 2|\sigma_{B}(D)|n+2$, where $E$ is the link diagram obtained from $D$ by taking $n$-parallel copies of $D$. Therefore, we obtain either  \[ \maxdeg(J_{L}(n)(t)) = c(D)\frac{(n-1)^{2}}{4} + |\sigma_{A}(D)|\frac{(n-1)}{2} + w(D)\dfrac{n^{2}-1}{4} - \frac{1}{2} = \frac{y(D)}{2}n^{2} + \mathcal{O}(n),\]  or  \[ \mindeg(J_{L}(n)(t)) = -c(D)\frac{(n-1)^{2}}{4} - |\sigma_{B}(D)|\frac{(n-1)}{2} + w(D)\dfrac{n^{2}-1}{4} + \frac{1}{2} = -\frac{x(D)}{2}n^{2} + \mathcal{O}(n),\]  where $c(D) = y(D) + x(D)$ and $w(D) = y(D) - x(D)$. Thus the result follows as a consequence of \cite[Theorem\,2.4]{KL}
\end{proof}

\begin{rem}
We point out that the condition $\breadth(V_{L}(t)) = c(L)- 1 - g_{T}(L)$ does not characterize semi-adequate links. In particular, the knot $8_{19}$ is known to be semi-adequate but $\breadth(V_{L}(t)) = 5 \neq 6 = c(L) - 1 - g_{T}(L)$.
\end{rem}

\begin{coro}
Let $L$ be a non-split adequate link. Then every diagram $D$  of $L$ satisfies $|\sigma_{A}(D)|+|\sigma_{B}(D)| \leq c(D) + 2 - 2g_{T}(L)$ and the equality occurs when $c(D) = c(L)$.
\end{coro}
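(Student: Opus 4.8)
The plan is to prove this final corollary as a direct combination of Lemma~\ref{basic} and Abe's theorem \cite[Theorem\,3.2]{A}. First I would invoke Lemma~\ref{basic}, which states that for a non-split link $L$ with \emph{any} diagram $D$ we have $\breadth(V_{L}(t)) \leq c(D) - g_{T}(D)$. Rewriting $g_{T}(D) = \frac{1}{2}(c(D) + 2 - |\sigma_{A}(D)| - |\sigma_{B}(D)|)$ according to the definition of Turaev genus, the inequality $\breadth(V_{L}(t)) \leq c(D) - g_{T}(D)$ becomes $\breadth(V_{L}(t)) \leq \frac{1}{2}(c(D) + |\sigma_{A}(D)| + |\sigma_{B}(D)| - 2)$, which is simply a rearrangement. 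The cleaner route, however, is to start from the purely combinatorial inequality $|\sigma_{A}(D)| + |\sigma_{B}(D)| \leq c(D) + 2$, which holds for every diagram and was already used in the proof of Lemma~\ref{basic}; from this and $g_{T}(L) \leq g_{T}(D)$ I would extract the desired bound.

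Concretely, the second step is to chain the two facts $g_{T}(L) \leq g_{T}(D)$ and $2g_{T}(D) = c(D) + 2 - |\sigma_{A}(D)| - |\sigma_{B}(D)|$. Substituting the equality into the inequality gives $2g_{T}(L) \leq c(D) + 2 - |\sigma_{A}(D)| - |\sigma_{B}(D)|$, and rearranging yields exactly $|\sigma_{A}(D)| + |\sigma_{B}(D)| \leq c(D) + 2 - 2g_{T}(L)$ for every diagram $D$ of $L$. This first assertion therefore needs only the definition of $g_{T}(L)$ as a minimum over diagrams and does not even use adequacy or the Jones polynomial; it is valid for any non-split link.

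The third step, establishing the equality case, is where the hypotheses of adequacy and $c(D) = c(L)$ enter. By Abe's result \cite[Theorem\,3.2]{A}, an adequate link satisfies $\breadth(V_{L}(t)) = c(L) - g_{T}(L)$. Taking a minimal-crossing diagram $D$ with $c(D) = c(L)$, I would combine this with Lemma~\ref{basic} applied to $D$, namely $\breadth(V_{L}(t)) \leq c(D) - g_{T}(D)$, to deduce $c(L) - g_{T}(L) \leq c(D) - g_{T}(D) = c(L) - g_{T}(D)$, hence $g_{T}(D) \leq g_{T}(L)$. Since always $g_{T}(L) \leq g_{T}(D)$, this forces $g_{T}(D) = g_{T}(L)$. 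Plugging $g_{T}(D) = g_{T}(L)$ back into the formula $2g_{T}(D) = c(D) + 2 - |\sigma_{A}(D)| - |\sigma_{B}(D)|$ then turns the inequality of the first part into the equality $|\sigma_{A}(D)| + |\sigma_{B}(D)| = c(D) + 2 - 2g_{T}(L)$.

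I do not expect a genuine obstacle here, as the corollary is essentially a bookkeeping consequence of results already proved. The only point requiring mild care is the logical dependence: the equality case relies on Abe's direction of the characterization (adequate $\Rightarrow$ breadth equals $c - g_T$), which is external input cited from \cite{A}, rather than on Theorem~\ref{main} of this paper, so I would make sure to invoke the correct reference. A subtle check is that a minimal-crossing diagram realizing $c(D) = c(L)$ exists and is non-split, which is automatic since $L$ is a non-split link; the inequality in the first assertion then specializes to an equality precisely because the adequacy hypothesis forces the chosen minimal diagram to also be a Turaev-genus-minimizing diagram.
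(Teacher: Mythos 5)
Your proposal is correct; the paper states this corollary without an explicit proof, treating it as immediate bookkeeping from its preceding results, and your argument is exactly that intended combination: the inequality for all diagrams follows from $g_{T}(L)\leq g_{T}(D)$ plus the formula $2g_{T}(D)=c(D)+2-|\sigma_{A}(D)|-|\sigma_{B}(D)|$ (no adequacy needed, as you rightly note), while the equality case uses Abe's direction \cite[Theorem\,3.2]{A} together with Lemma~\ref{basic} to force $g_{T}(D)=g_{T}(L)$ for a minimal-crossing diagram. This matches the ingredients the paper itself deploys inside the proof of Theorem~\ref{main} (where the same equality is extracted for minimal diagrams via the $i=j=0$ degree argument), so your route is essentially the paper's, if anything slightly cleaner in the equality step.
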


\begin{coro}\label{main1}
Let $L$ be a link with $\breadth(V_{L}(t)) = c(L) -1$. Then $L$ is an adequate non-alternating link.
\end{coro}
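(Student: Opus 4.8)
The plan is to derive Corollary \ref{main1} as a consequence of the main theorem together with the known relationship between the Turaev genus and alternating links. First I would observe that the hypothesis $\breadth(V_{L}(t)) = c(L) - 1$ forces $g_{T}(L) = 1$. Indeed, by the Dasbach--Lin inequality recalled in the introduction, $g_{T}(L) \leq c(L) - \breadth(V_{L}(t)) = 1$, so $g_{T}(L) \in \{0, 1\}$. If $g_{T}(L) = 0$, then $L$ is alternating and the breadth equals the crossing number, contradicting $\breadth(V_{L}(t)) = c(L) - 1 < c(L)$; hence $g_{T}(L) = 1$. This immediately shows $L$ is non-alternating.

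Next I would substitute $g_{T}(L) = 1$ into the hypothesis to verify that the condition of Theorem \ref{main} is satisfied. We have $c(L) - g_{T}(L) = c(L) - 1 = \breadth(V_{L}(t))$, which is exactly the equality $\breadth(V_{L}(t)) = c(L) - g_{T}(L)$ required in Theorem \ref{main}. Applying the main theorem then yields that $L$ is adequate. Combining this with the non-alternating conclusion from the first step gives the statement.

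The main subtlety to be careful about is the exclusion of the alternating case. One must confirm that the inequality $\breadth(V_{L}(t)) \leq c(L)$ is strict for a non-alternating link and that equality in the Tait/Kauffman--Murasugi--Thistlethwaite bound characterizes exactly the alternating links; this is precisely the classical result recalled in the opening paragraph of the introduction. Since the hypothesis gives $\breadth(V_{L}(t)) = c(L) - 1$, strictly less than $c(L)$, the link cannot be alternating, and this is what rules out $g_{T}(L) = 0$. There is essentially no serious obstacle here: the entire argument is a short deduction chaining the Dasbach--Lin bound, the classical characterization of alternating links via $g_{T}(L) = 0$, and Theorem \ref{main}. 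The only point requiring mild attention is handling any split or trivial edge cases, but the framework throughout assumes $L$ is non-split, so the cited inequalities apply directly.
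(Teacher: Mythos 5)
Your proposal is correct and follows essentially the same route as the paper: bound $g_{T}(L)\leq 1$ via the Dasbach et al.\ inequality, rule out $g_{T}(L)=0$ using the classical characterization of alternating links by breadth, and then apply Theorem \ref{main} with $g_{T}(L)=1$. The paper's proof is just a terser version of this same chain of deductions.
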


\begin{proof}
Note that $L$ is non-alternating since $\breadth(V_{L}(t)) < c(L)$. Using the result of \cite[Corollary\,7.3]{DFKLS} that states  $g_{T}(L) \leq  c(L) -\breadth(V_{L}(t)) $ for any link $L$, we get $g_{T}(L)=1$. Thus, we obtain that the link is adequate according to the result of Theorem \ref{main}.
\end{proof}

Another interesting generalization of alternating links is the class of quasi-alternating links. These links  have been introduced  by Ozsv$\acute{\text{a}}$th and Szab$\acute{\text{o}}$ while studying the Heegaard Floer homology of  branched double covers along alternating  links   \cite{OS}. Quasi-alternating links are known to be thin in Khovanov homology \cite{MO}.  Many obstructions have been proved  recently for a link to be quasi-alternating. In particular, obstructions using the Jones polynomials have been studied in \cite{CQ,QC}. The following corollary introduces another simple obstruction.

\begin{coro}
Let $L$ be a link such that $\breadth(V_{L}(t)) = c(L) - g_{T}(L)$ with $g_{T}(L) > 0$, then $L$ is not quasi-alternating. In particular, any semi-alternating link is not quasi-alternating.
\end{coro}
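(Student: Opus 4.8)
The plan is to read this off from Theorem \ref{main} combined with the homological thinness of quasi-alternating links. First I would observe that the hypothesis $\breadth(V_{L}(t)) = c(L) - g_{T}(L)$ is exactly the hypothesis of Theorem \ref{main}, so $L$ is adequate. Since a link is alternating precisely when its Turaev genus vanishes, the assumption $g_{T}(L) > 0$ forces $L$ to be non-alternating. Hence $L$ is adequate and non-alternating, and the whole statement reduces to showing that no adequate non-alternating link can be quasi-alternating.

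For this reduction I would play two opposing facts against each other. Quasi-alternating links are Khovanov thin, i.e.\ their reduced Khovanov homology is supported on a single $\delta$-diagonal \cite{MO}. On the other hand, although the general upper bound only gives that the Khovanov width satisfies $w_{Kh}(L) \le g_{T}(L) + 2$, for an adequate link this becomes an equality: the extreme Khovanov groups of an adequate diagram can be located explicitly and lie on diagonals separated by exactly $g_{T}(L)$. Hence an adequate link with $g_{T}(L) > 0$ has Khovanov width at least three and is not thin, so by \cite{MO} it is not quasi-alternating. This width--genus input is the crux of the proof, and the step I expect to be the main obstacle, since it requires a lower bound on the width for adequate links rather than the mere upper bound.

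Finally, for the ``in particular'' clause I would argue that every semi-alternating link, in the sense of Lickorish--Thistlethwaite \cite{LT}, is adequate and non-alternating, with $g_{T} = 1$. Adequacy together with Abe's equality \cite[Theorem\,3.2]{A} yields $\breadth(V_{L}(t)) = c(L) - g_{T}(L)$, while non-alternatingness gives $g_{T}(L) > 0$; the first part of the corollary then applies verbatim to conclude that a semi-alternating link is not quasi-alternating. Apart from the width--genus input, every remaining ingredient is either a direct appeal to Theorem \ref{main} or a classical property of (semi-)alternating diagrams.
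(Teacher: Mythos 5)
Your proposal is correct and follows essentially the same route as the paper: Theorem \ref{main} gives adequacy, $g_{T}(L)>0$ gives non-alternating, and then homological thickness of non-alternating adequate links is played against the thinness of quasi-alternating links from \cite[Theorem\,1]{MO}. The only difference is bookkeeping: the paper cites \cite[Proposition\,5.1]{Kh} directly for the thickness step (rather than routing through your width--genus equality, which rests on the same computation of extreme Khovanov groups), and it handles the semi-alternating case via \cite[Proposition\,5]{LT}, i.e.\ $\breadth(V_{L}(t))=c(L)-1$, instead of your combination of adequacy with Abe's equality.
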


\begin{proof}
As a consequence of Theorem \ref{main}, the link $L$ is adequate. The  assumption that $g_{T}(L) > 0$ implies that  $L$ is not alternating.  Recall that a nonalternating adequate link is homologically thick in Khovanov homology  \cite[Proposition\,5.1]{Kh}. Therefore, $L$ is not quasi-alternating since quasi-alternating links are  homologically thin in Khovanov homology  \cite[Theorem\,1]{MO}. The last statement follows from the fact that the breadth of the Jones polynomial of any semi-alternating links is equal to its crossing number minus one according to \cite[Proposition\,5]{LT}.
\end{proof}

\begin{rem}
We point out that semi-alternating links were defined for the first time in \cite[Section\,2]{LT} and known to have non-alternating Jones polynomial according to  \cite[Proposition\,5]{LT} and this gives another proof that they are not quasi-alternating.
\end{rem}

\begin{coro}
Let $L$ be a quasi-alternating link. Then either $c(L) = \breadth(V_{L}(t))$ or $ \breadth(V_{L}(t)) \leq c(L)-2$.
\end{coro}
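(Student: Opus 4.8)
The plan is to prove the contrapositive dichotomy for a quasi-alternating link $L$. The key observation is that the final corollary follows by combining the immediately preceding corollary with the classical result of Thistlethwaite, Kauffman, and Murasugi. First I would recall that quasi-alternating links are non-split and, crucially, that their Turaev genus is zero, i.e.\ they are alternating in the sense detected by $g_T$. This is the pivotal input: the preceding corollary shows that any link with $\breadth(V_{L}(t)) = c(L) - g_{T}(L)$ and $g_{T}(L) > 0$ fails to be quasi-alternating, so for a \emph{quasi-alternating} link one cannot have the ''equality'' case realized with positive Turaev genus.

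The main step is a case split on the value of $\breadth(V_{L}(t))$ relative to $c(L)$. From the Thistlethwaite--Kauffman--Murasugi bound recalled in the introduction, we always have $\breadth(V_{L}(t)) \leq c(L)$, with equality precisely when $L$ is alternating. This handles one horn of the dichotomy: if $L$ is alternating, then $c(L) = \breadth(V_{L}(t))$ and we are done. The remaining possibility is that $L$ is quasi-alternating but non-alternating, so $\breadth(V_{L}(t)) \leq c(L) - 1$, and I must rule out the single intermediate value $\breadth(V_{L}(t)) = c(L) - 1$ to conclude $\breadth(V_{L}(t)) \leq c(L) - 2$.

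To eliminate the case $\breadth(V_{L}(t)) = c(L) - 1$, I would invoke Corollary \ref{main1}, which states that any link with $\breadth(V_{L}(t)) = c(L) - 1$ is an adequate non-alternating link. Such a link has $g_{T}(L) = 1 > 0$ and satisfies $\breadth(V_{L}(t)) = c(L) - g_{T}(L)$, so by the preceding corollary it is \emph{not} quasi-alternating. This contradicts our hypothesis that $L$ is quasi-alternating. Therefore the value $c(L) - 1$ is excluded, and for a non-alternating quasi-alternating link we must have $\breadth(V_{L}(t)) \leq c(L) - 2$, completing the dichotomy.

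The step I expect to be the main (and essentially the only) obstacle is confirming that the two cases are exhaustive and correctly delineated. The delicate point is that the argument hinges on Corollary \ref{main1} applying exactly at the boundary value $c(L)-1$: one must be sure that there is no quasi-alternating link slipping through at $\breadth = c(L)-1$ via some non-adequate route, which is precisely what Corollary \ref{main1} forecloses by forcing adequacy (hence homological thickness, hence non-quasi-alternating) at that value. Beyond this, the proof is a short logical assembly, so I would keep the exposition to a couple of sentences per case and lean on the already-established corollaries rather than re-deriving any bracket-degree estimates.
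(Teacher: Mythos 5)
Your argument is correct and is essentially the paper's own proof: the Thistlethwaite--Kauffman--Murasugi bound settles the alternating case, and Corollary \ref{main1} combined with the preceding corollary (adequate non-alternating links are homologically thick, hence not quasi-alternating) excludes the single boundary value $\breadth(V_{L}(t)) = c(L)-1$. One caveat: your opening claim that quasi-alternating links have Turaev genus zero is false --- non-alternating quasi-alternating links exist and necessarily have $g_{T}(L) \geq 1$, since $g_{T}(L)=0$ holds if and only if $L$ is alternating; what is true, and what your case analysis actually uses, is only that a quasi-alternating link cannot realize the equality $\breadth(V_{L}(t)) = c(L) - g_{T}(L)$ with $g_{T}(L)>0$. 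Since that false statement is never invoked in the substantive part of your argument, the proof itself stands as written.
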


\begin{proof}
The proof is straightforward using Corollary \ref{main1} and the fact that  $c(L) = \breadth(V_{L}(t))$ if and only if $L$ is alternating.
\end{proof}

The following result  is an analogue of  \cite[Corollary\,5.1]{KL}:
\begin{coro}\label{simple}
Suppose $L$ is a non-adequate link with a diagram $D$ such that $\breadth(V_{L}(t)) = c(D) - l$ for $ l \geq 2$, then $c(D) - l + 2 \leq c(L) \leq c(D)$.
\end{coro}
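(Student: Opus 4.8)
The plan is to split the claim into its two inequalities and handle them separately: the upper bound is immediate, while the lower bound reduces to an application of Corollary \ref{main1}.

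For the upper bound $c(L) \leq c(D)$ I would simply observe that $c(L)$ is by definition the minimum of $c(D')$ taken over all diagrams $D'$ of $L$, so the particular diagram $D$ already supplies the bound $c(L) \leq c(D)$ with no further work. For the lower bound, the first step is to rewrite the target inequality: since $\breadth(V_{L}(t)) = c(D) - l$, the conclusion $c(D) - l + 2 \leq c(L)$ is equivalent to $c(L) \geq \breadth(V_{L}(t)) + 2$. I would then invoke the classical theorem of Kauffman, Murasugi and Thistlethwaite, which gives $\breadth(V_{L}(t)) \leq c(L)$ for every link, with equality precisely when $L$ is alternating. Because alternating links are adequate and $L$ is assumed non-adequate, $L$ cannot be alternating, so the inequality is strict and $c(L) \geq \breadth(V_{L}(t)) + 1$.

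The only genuinely substantive step is then to exclude the borderline case $c(L) = \breadth(V_{L}(t)) + 1$. Here I would argue by contradiction: this equality reads $\breadth(V_{L}(t)) = c(L) - 1$, which is exactly the hypothesis of Corollary \ref{main1}, and that corollary forces $L$ to be adequate, contradicting the standing assumption. Hence $c(L) \geq \breadth(V_{L}(t)) + 2 = c(D) - l + 2$, and combined with the upper bound this yields $c(D) - l + 2 \leq c(L) \leq c(D)$, as required.

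I do not expect any real obstacle: the argument involves no computation, and the hypothesis $l \geq 2$ plays no role beyond guaranteeing that $L$ is non-alternating is consistent. The single point needing care is the logical packaging, namely that non-adequacy is used twice in slightly different guises, first to rule out the alternating (equality) case through the classical breadth theorem, and then, through Corollary \ref{main1}, to rule out the $c(L) - 1$ case. Once these two exclusions are in place, both bounds drop out at once.
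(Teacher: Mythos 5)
Your proposal is correct and follows essentially the same route as the paper's own proof: both use the Kauffman--Murasugi--Thistlethwaite bound $\breadth(V_{L}(t)) \leq c(L)$ to rule out the case $c(L) = c(D)-l$ (which would force $L$ to be alternating, hence adequate) and Corollary \ref{main1} to rule out $c(L) = c(D)-l+1$, with the upper bound being trivial. The only difference is cosmetic packaging (the paper introduces a minimal diagram $D'$ before the case analysis), not mathematical substance.
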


\begin{proof}
We know that $\breadth(V_{L}(t)) \leq c(L)$ and this implies that $c(D) - l \leq c(L)$. Now suppose that $c(L) < c(D)$, then there is another diagram $D^{'}$ of $L$ such that $c(L) = c(D^{'})< c(D)$. We have that $c(D^{'}) \geq \breadth(V_{L}(t)) = c(D) - l$. If the equality occurs, then $L$ is alternating which implies that it is adequate. 
Now if $c(D^{'}) = c(D) - l + 1$, then $L$ is adequate since $\breadth(V_{L}(t)) = c(L)-1$. The last two cases are not plausible since the given link is not adequate. Thus we obtain $c(D) - l + 2 \leq c(D^{'}) = c(L) \leq c(D)$.
\end{proof}

As a direct application, the above corollary can be used to establish a special case of the conjecture about the crossing number of the connected sum of links.
\begin{coro}
Let $D$ be a diagram of a knot $K$ such that $\breadth(V_{K}(t)) = c(D) -2$ and let  $K_{1},K_{2},\ldots, K_{m}$ be a family of  alternating knots, then
 $c(K\#K_{1}\#K_{2}\#\ldots\#K_{m}) = c(K) + \sum\limits_{i=1}^{m} c(K_{i}).$
\end{coro}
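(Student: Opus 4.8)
The plan is to compute the crossing number of $K' := K\#K_{1}\#\cdots\#K_{m}$ by bounding it from both sides and matching the bounds. The upper bound is immediate: stacking a minimal diagram of $K$ with reduced alternating diagrams $D_{i}$ of the $K_{i}$ (so that $c(D_{i}) = c(K_{i})$) produces a connected-sum diagram of $K'$ with $c(K) + \sum_{i} c(K_{i})$ crossings, whence $c(K') \le c(K) + \sum_{i} c(K_{i})$. So the entire difficulty lies in the matching lower bound.

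First I would assemble the two multiplicative/additive inputs. Because the Jones polynomial is multiplicative under connected sum, its breadth is additive, and since each $K_{i}$ is alternating we have $\breadth(V_{K_{i}}(t)) = c(K_{i})$ by the Kauffman--Murasugi--Thistlethwaite theorem \cite{Th,K,Mu}; hence $\breadth(V_{K'}(t)) = \breadth(V_{K}(t)) + \sum_{i} c(K_{i})$. The naive lower bound $c(K') \ge \breadth(V_{K'}(t))$ falls short of the target by exactly $2$, so recovering these two missing crossings is the crux of the argument.

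The recovery splits according to whether $K$ is adequate, using that a connected sum is adequate precisely when each summand is (alternating summands being adequate). If $K$ is adequate, then so is $K'$, and Abe's equality \cite[Theorem\,3.2]{A} (the converse being Theorem \ref{main}) applies to both $K$ and $K'$: writing $\breadth(V_{K'}(t)) = c(K') - g_{T}(K')$ and $\breadth(V_{K}(t)) = c(K) - g_{T}(K)$, substituting the additivity of the breadth, and using the additivity of the Turaev genus under connected sum together with $g_{T}(K_{i}) = 0$ so that $g_{T}(K') = g_{T}(K)$, the Turaev-genus terms cancel and leave exactly $c(K') = c(K) + \sum_{i} c(K_{i})$. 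If instead $K$ is not adequate, then $K'$ is not adequate either; here I would apply Corollary \ref{simple} to $K'$ equipped with the connected-sum diagram $\tilde{D} = D\#D_{1}\#\cdots\#D_{m}$. Since $\breadth(V_{K'}(t)) = (c(D)-2) + \sum_{i} c(K_{i}) = c(\tilde{D}) - 2$, Corollary \ref{simple} with $l = 2$ forces $c(K') = c(\tilde{D}) = c(D) + \sum_{i} c(K_{i})$; combining this with the upper bound and the trivial inequality $c(D) \ge c(K)$ pins down $c(D) = c(K)$ and yields the claim.

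The main obstacle is the lower bound, and more precisely the two crossings that the Jones breadth cannot see. The point I would be most careful about is that the gap is closed by two genuinely different mechanisms in the two cases---the additive $+g_{T}$ correction in Abe's formula when $K$ is adequate, and the exact ``$-l+2$'' gain of Corollary \ref{simple} when $K$ is not---so the case analysis is essential, and each case must be verified to hinge on the good behaviour of adequacy, the Turaev genus, and the breadth under connected sum.
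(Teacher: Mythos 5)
Your overall strategy---additivity of the Jones breadth under connected sum combined with Corollary \ref{simple}---is exactly the paper's own (one-line) proof, and your upper bound and the non-adequate bookkeeping are fine; the problem is that two load-bearing claims in your case analysis are asserted without justification. The first is the ``additivity of the Turaev genus under connected sum'' used in your adequate case to get $g_{T}(K')=g_{T}(K)$. Only subadditivity $g_{T}(K_{1}\#K_{2})\leq g_{T}(K_{1})+g_{T}(K_{2})$ is known; full additivity is an open problem, and for adequate knots it is a \emph{consequence} (via Abe's formula) of additivity of the crossing number, which is precisely what you are trying to prove---so as written this case is circular. The fix is standard and avoids the Turaev genus entirely: an adequate knot has an adequate diagram, adequate diagrams realize the crossing number \cite{T1}, and a connected sum of adequate diagrams is adequate, which gives $c(K\#K_{1}\#\cdots\#K_{m})=c(K)+\sum_{i}c(K_{i})$ directly (alternatively, use additivity of the Jones diameter together with \cite[Theorem\,1.1]{KL}).

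The second, more serious gap is the implication ``$K$ not adequate $\Rightarrow$ $K'$ not adequate,'' i.e.\ the claim that a connected summand of an adequate link is adequate. Only the direction you do not need (summands adequate $\Rightarrow$ sum adequate) is standard; the direction you rely on is proved neither in this paper nor in the cited literature, and by \cite[Theorem\,1.1]{KL} together with additivity of the Jones diameter it is, in your setting, \emph{equivalent} to the conclusion being proved: given $K'$ adequate, one has $jd_{K}=2c(K')-2\sum_{i}c(K_{i})$, so $K$ is adequate if and only if $c(K')=c(K)+\sum_{i}c(K_{i})$. Concretely, your argument never excludes the scenario in which $K$ is non-adequate with $c(K)=c(D)$ while $K'$ is adequate with $g_{T}(K')=1$, in which case Abe's formula gives $c(K')=c(K)-1+\sum_{i}c(K_{i})$ and the statement would fail. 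To be fair, the paper's own terse proof silently faces the same issue, since Corollary \ref{simple} carries a non-adequacy hypothesis and the case ``$K'$ adequate'' must be disposed of separately; but in your proposal this step is stated as a known fact and the whole non-adequate case hinges on it, so you should either supply a citable proof that adequacy descends to connected summands or restructure the argument so that the case ``$K'$ adequate, $K$ not adequate'' is eliminated by other means.
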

\begin{proof}
The result follows directly from the additivity of the breadth of the Jones polynomial under the  connected sum operation and Corollary \ref{simple}.
\end{proof}

\begin{rem}
\begin{enumerate}
\item The result in Corollary \ref{simple} with $l=2$ can be used to confirm the computation of the crossing number of 36 non-alternating knots of 10 crossings or less. Some of these knots are $8_{20}, 8_{21}, 9_{43}, 9_{44}, \ldots$. These knots satisfy that the crossing number equals to the breadth of its Jones polynomial plus two.
\item There are at least 96 non-alternating knots of 12 crossings or less that satisfy the condition in Corollary \ref{main1}. This proves that all these knots are adequate. Some of these knots are $10_{152},10_{153},10_{154}, 11_{n6}, 11_{n9}, \ldots$.
\item  Corollary \ref{main1} does not hold if $\breadth(L) = c-2$. The knots $10_{145}$ and $10_{161}$ have breadth equal to eight which is the crossing number minus two. These knots are not adequate as it was discussed  in Example \ref{example}.
\end{enumerate}
\end{rem}


\begin{thebibliography}{9}
\bibitem {A} T. Abe, \textsl{The Turaev genus of an adequate knot}, Topology Appl., \textbf{156} (2009), 2704--2712.
\bibitem {CQ} N. Chbili, and K. Qazaqzeh, On the Jones polynomial of quasi-alternating links, Topol. Appl., \textbf{264} (2019), 1--11.
\bibitem {DFKLS} O. Dasbach, D. Futer, E. Kalfagianni, X. Lin and N. Stoltzfus, \textsl{The Jones polynomial and graphs on surfaces}, J. Combin. Theory Ser. B, \textbf{98}(2) (2008), 384--399.
\bibitem {G} S. Garoufalidis, \textsl{The Jones slopes of a knot}, Quantum Topol. \textbf{2}(1) (2011), 43--69.
\bibitem {KL} E. Kalfagianni and C. Lee, \textsl{Jones diameter and crossing number of knots}, Adv. Math., \textbf{417} (2023), 108937.
\bibitem {K} L. H. Kauffman, \textsl{State models and the Jones polynomial}, Topology, \textbf{26}(3) (1987), 395--407.
\bibitem {KL2} L. Kauffman and S. Lins, \textsl{Temperley-Lieb Recoupling Theory and Invariants of 3-Manifolds}, Annals of Mathematics Studies., vol. 134, Princeton University Press, Princeton, NJ, 1994.
\bibitem {Kh} M. Khovanov, \textsl{Patterns in knot cohomology. I}, Experiment. Math. \textbf{12}(3) (2003), 365--374.
\bibitem {Lick} W. Lickorish, An introduction to knot theory, Graduate Texts in Mathematics 175, (NY: Springer-Verlag, 1997)
\bibitem {LM} W. Lickorish, K. Millett, \textsl{A polynomial invariant of oriented links}, Topology \textbf{26}(1) (1987), 107--141.
\bibitem {LT} W. Lickorish and M. Thistlethwaite, \textsl{Some links with non-trivial polynomials and their crossing numbers}, Comment. Math. Helv., \textbf{63} (1988), 527--539.
\bibitem {knotinfo} C. Livingston and A. H. Moore,  {\it KnotInfo: Table of Knot Invariants},  knotinfo.math.indiana.edu, August 15, 2024.
\bibitem {MO} C. Manolescu and P. Ozsv$\acute{a}$th, \textsl{On the Khovanov and knot Floer homologies of quasi-alternating links}. In Proceedings of G$\ddot{o}$kova Geometry-Topology Conference (2007),  60-81. G$\ddot{o}$kova Geometry/Topology Conference (GGT), G$\ddot{o}$kova, 2008.
\bibitem {Mu} K. Murasugi, \textsl{Jones polynomials and classical conjectures}, Topology, \textbf{26}(2) (1987), 187--194.
\bibitem {OS} P. Ozsv$\acute{\text{a}}$th and Z. Szab$\acute{\text{o}}$, \textsl{On the Heegaard Floer homology of branched double-covers}, Adv. Math., \textbf{194}(1) (2005), 1--33.
\bibitem {P} J. Przytycki, \textsl{Skein modules of 3-manifolds}, Bull. Pol. Acad. Sci. Math. \textbf{39}(1-2) (1991), 91--100.
\bibitem {QC} K. Qazaqzeh, and N. Chbili, \textsl{On Khovanov Homology of Quasi-Alternating Links}, Mediterr. J. Math., \textbf{19}(3) (2022). https://doi.org/10.1007/s00009-022-02006-5.
\bibitem {Th} M. Thistlethwaite, \textsl{A spanning tree expansion of the Jones polynomial}, Topology \textbf{26} (1988), 297--309.
\bibitem {T1} M. Thistlethwaite, \textsl{On the Kauffman polynomial of an adequate link}, Invent. Math. \textbf{93} (1988), 285--296.

\end{thebibliography}
\end{document}